\newtheorem{THEO}{Theorem}
\newtheorem{CORO}[THEO]{Corollary}
\newtheorem{LEMM}[THEO]{Lemma}
\newtheorem{DEFI}[THEO]{Definition}
\DeclarePairedDelimiterX{\norm}[1]{\lVert}{\rVert}{#1}
\def\({\left(}
\def\){\right)}
\def\R{\mathbb{ R}}
\def\rto{\rightarrow}
\def\CC{\mathcal{C}}
\def\EE{\mathcal{E}}
\def\1{\textbf{1}}
\def\spn{\operatorname{span}}
\def\w{\omega}
\title[]{Banach spaces with weak*-sequential dual ball}
\author[G.\ Mart\'{\i}nez-Cervantes]{Gonzalo Mart\'{\i}nez-Cervantes}
\address{Departamento de Matem\'{a}ticas\\
	Facultad de Matem\'{a}ticas\\ Universidad de Murcia\\ 30100 Espinardo, Murcia\\
	Spain}
\email{gonzalo.martinez2@um.es}
\subjclass[2010]{Primary 57N17, 54D55, 46A50; Secondary 46B20, 46B50.}
\keywords{Sequential, sequentially compact, countable tightness, Fr\'{e}chet-Urysohn, angelic, Efremov property, Banach space}
\thanks{The author was partially supported by  the research project 19275/PI/14 funded by Fundaci\'{o}n S\'{e}neca - Agencia de Ciencia y Tecnolog\'{i}a de la Regi\'{o}n de Murcia within the framework of PCTIRM 2011-2014 and by Ministerio de Econom\'{i}a y Competitividad and FEDER (project MTM2014-54182-P)
}
\begin{document}

\begin{abstract}
A topological space is said to be sequential if every sequentially closed subspace is closed. We consider Banach spaces with weak*-sequential dual ball. In particular, we show that if $X$ is a Banach space with weak*-sequentially compact dual ball and $Y \subset X$ is a subspace such that $Y$ and $X/Y$ have weak*-sequential dual ball, then $X$ has weak*-sequential dual ball. As an application we obtain that the Johnson-Lindenstrauss space $JL_2$ and $C(K)$ for $K$ scattered compact space of countable height are examples of Banach spaces with weak*-sequential dual ball, answering in this way a question of A. Plichko. 
\end{abstract}

\maketitle

\section{Introduction}

All topological spaces considered in this paper are Hausdorff. The symbol $\w^\ast$ denotes the weak* topology of the corresponding Banach space.
A topological space $T$ is said to be \textit{sequentially compact} if every sequence in $T$ contains a convergent subsequence. Moreover, $T$ is said to be \textit{Fr\'{e}chet-Urysohn} (FU for short) if for every subspace $F$ of $T$, every point in the closure of $F$ is the limit of a sequence in $F$. Every FU compact space is sequentially compact.
A Banach space with weak*-FU dual ball is said to have \textit{weak*-angelic dual}. Some examples of Banach spaces with weak*-angelic dual are WCG Banach spaces and, in general, WLD Banach spaces. On the other hand, every weak Asplund Banach space and every Banach space without copies of $\ell_1$ in the dual have weak*-sequentially compact dual ball \cite[Chapter XIII]{Diest}.

In this paper we are going to focus on sequential spaces, which is a generalization of the FU property. If $T$ is a topological space and $F$ is a subspace of $T$, the \textit{sequential closure} of $F$ is the set of all limits of sequences in $F$. $F$ is said to be \textit{sequentially closed} if it coincides with its sequential closure. 
A topological space is said to be \textit{sequential} if any sequentially closed subspace is closed. Thus, every FU space is sequential.
Another natural generalization of the FU property is countable tightness. A topological space $T$ is said to have \textit{countable tightness} if for every subspace $F$ of $T$, every point in the closure of $F$ is in the closure of a countable subspace of $F$. It can be proved that every sequential space has countable tightness. However, whether the converse implication on the class of compact spaces is true is known as the Moore-Mrowka Problem and it is undecidable in ZFC \cite{Ba88}.
Therefore, for a compact space $K$, we have the following implications:
$$
\begin{aligned}
K \mbox{ is FU} \Longrightarrow & ~K \mbox{ is sequential} & \hspace{-2cm} \Longrightarrow  K \mbox{ is sequentially compact} \\
 & \hspace{1cm} \Downarrow \\ 
 &K \mbox{ has countable tightness} 
\end{aligned}
$$

In \cite{Pli15} A. Plichko asked whether every Banach space with weak*-sequential dual ball has weak*-angelic dual. In the next section we prove the following theorem,
which is applied to prove that the Johnson-Lindenstrauss space $JL_2$ provides a negative answer to Plichko's question:
\begin{THEO}
	\label{TeoBase}	
	Let $X$ be a Banach space with weak*-sequentially compact dual ball.
	Let $Y \subset X$ be a subspace with weak*-sequential dual ball with sequential order $\leq \gamma_1$ and such that $X/Y$ has weak*-sequential dual ball with sequential order $\leq \gamma_2$. Then $X$ has weak*-sequential dual ball with sequential order $\leq \gamma_1+\gamma_2$.
\end{THEO}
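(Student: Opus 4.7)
The plan is to exploit the short exact sequence
\[
0 \longrightarrow Y^{\perp} \longrightarrow X^{*} \xrightarrow{\ \pi\ } Y^{*} \longrightarrow 0,
\]
where $\pi$ is the restriction map: it is $\ws$-to-$\ws$ continuous, $\pi(B_{X^{*}}) = B_{Y^{*}}$ by Hahn--Banach, and $Y^{\perp}$ is $\ws$-homeomorphically and isometrically identified with $(X/Y)^{*}$ (so $Y^{\perp}\cap B_{X^{*}}$ corresponds to $B_{(X/Y)^{*}}$). Denote by $[C]_{\alpha}$ the $\alpha$-th iterated $\ws$-sequential closure of $C$ in whichever weak*-compact ball is clear from context (so $[C]_{0}=C$, $[C]_{\alpha+1}$ is the sequential closure of $[C]_{\alpha}$, and $[C]_{\lambda}=\bigcup_{\beta<\lambda}[C]_{\beta}$ at limits). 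The target is to show $x^{*}\in[A]_{\gamma_{1}+\gamma_{2}}$ for every $A\subseteq B_{X^{*}}$ and every $x^{*}\in\overline{A}^{\,\ws}$.

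The first ingredient is a lifting lemma proved by transfinite induction on $\alpha$: for every $A\subseteq B_{X^{*}}$ and every $y^{*}\in[\pi(A)]_{\alpha}$ (computed in $B_{Y^{*}}$) there exists $x^{*}\in[A]_{\alpha}$ (computed in $B_{X^{*}}$) with $\pi(x^{*})=y^{*}$. The successor step is where the $\ws$-sequential compactness of $B_{X^{*}}$ enters: given $y_{n}^{*}\to y^{*}$ in $B_{Y^{*}}$, the induction hypothesis provides lifts $x_{n}^{*}\in[A]_{\beta}$ in $B_{X^{*}}$; a $\ws$-convergent subsequence $(x_{n_{k}}^{*})$ then converges to some $x^{*}\in[A]_{\beta+1}$ with $\pi(x^{*})=y^{*}$ by continuity of $\pi$.

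The key step is the following localisation claim: if $F=\pi^{-1}(\pi(x^{*}))\cap B_{X^{*}}$ is the ($\ws$-closed) fibre containing $x^{*}$, then $x^{*}\in\overline{[A]_{\gamma_{1}}\cap F}^{\,\ws}$. To see this, fix any $\ws$-closed neighbourhood $U$ of $x^{*}$ (these form a basis since the $\ws$-topology is regular) and set $A_{U}=A\cap U$. Then $x^{*}\in\overline{A_{U}}^{\,\ws}$, so $\pi(x^{*})\in\overline{\pi(A_{U})}^{\,\ws}=[\pi(A_{U})]_{\gamma_{1}}$ by the hypothesis on $B_{Y^{*}}$, and the lifting lemma applied to $A_{U}$ produces $b_{U}^{*}\in[A_{U}]_{\gamma_{1}}$ with $\pi(b_{U}^{*})=\pi(x^{*})$. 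Since $U$ is closed and contains $A_{U}$, all iterated $\ws$-sequential closures of $A_{U}$ stay inside $U$, so $b_{U}^{*}\in U\cap[A]_{\gamma_{1}}\cap F$. Letting $U$ range over a basis of $\ws$-closed neighbourhoods of $x^{*}$ gives the claim.

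Finally, the translation $z\mapsto z-x^{*}$ identifies $F$ $\ws$-homeomorphically with a $\ws$-closed subset of $2B_{(X/Y)^{*}}$; since scaling is a $\ws$-homeomorphism and closed subspaces of sequential spaces inherit an upper bound on the sequential order, $F$ itself has $\ws$-sequential order $\leq\gamma_{2}$. Combined with the previous paragraph, this gives $x^{*}\in[[A]_{\gamma_{1}}\cap F]_{\gamma_{2}}$ computed inside $F$; and because $F$ is $\ws$-closed in $B_{X^{*}}$, sequential closures of subsets of $F$ taken inside $F$ coincide with those taken inside $B_{X^{*}}$, yielding $x^{*}\in[A]_{\gamma_{1}+\gamma_{2}}$ inside $B_{X^{*}}$ as required. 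The main obstacle is the third paragraph: the lifting lemma alone only produces \emph{some} preimage of $\pi(x^{*})$ in $[A]_{\gamma_{1}}$, not one near $x^{*}$; the localisation trick (applying the lifting lemma to $A_{U}$ rather than to $A$) is what forces preimages into every $\ws$-neighbourhood of $x^{*}$ and cleanly separates the $\gamma_{1}$-step in the base $Y^{*}$ from the $\gamma_{2}$-step in the fibre $(X/Y)^{*}$.
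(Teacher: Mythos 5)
Your proof is correct and follows essentially the same route as the paper: the lifting lemma is the paper's Lemma \ref{LemmAuxClau}, and your localisation step (applying the lifting lemma to $A\cap U$ for closed neighbourhoods $U$ and collecting the lifts in the fibre, which is a copy of a ball of $(X/Y)^\ast$) is exactly the paper's argument with the sets $F_{A,\varepsilon}$. The only cosmetic difference is that the paper first translates the target point to $0$, so the fibre is literally $Y^\perp\cap B_{X^\ast}\cong B_{(X/Y)^\ast}$ and no scaling or translation of the fibre is needed at the end.
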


One of the properties studied by A. Plichko in \cite{Pli15} is property $\EE$ of Efremov.
A Banach space $X$ is said to have\textit{ property $\EE$} if every point in the weak*-closure of any \emph{convex} subset $C \subset B_{X^\ast}$ is the weak*-limit of a sequence in $C$.
We say that $X$ has \emph{property $\EE'$} if every weak*-sequentially closed \emph{convex} set in the dual ball is weak*-closed.
Thus, if $X$ has weak*-angelic dual then it has property $\EE$ and if $X$ has weak*-sequential dual ball then $X$ has property $\EE'$. We also provide a convex version of Theorem \ref{TeoBase} (see Theorem \ref{TeoConv}).

Another related Banach space properties are Mazur property and property (C). A Banach space $X$ has \textit{Mazur property} if every $x^{\ast \ast} \in X^{\ast \ast}$ which is weak*-sequentially continuous on $X^\ast$ is weak*-continuous and, therefore, $x^{\ast \ast} \in X$.
Notice that if a topological space $T$ is sequential then any sequentially continuous function $f:T \rto \R$ is continuous. Thus, it follows from the Banach-Dieudonn\'e Theorem that every Banach space with weak*-sequential dual ball has the Mazur property. Moreover, property $\EE'$ also implies Mazur property.

A Banach space $X$ has \textit{property (C)} of Corson if and only if every point in the closure of $C$ is in the weak*-closure of a countable subset of $C$ for every convex set $C$ in $B_{X^\ast}$ (this characterization of property (C) is due to R. Pol \cite{Pol80}).

Thus, we have the following implications among these Banach space properties: 

$$
\begin{aligned}
\mbox{weak*-angelic dual} & \Rightarrow \mbox{weak*-sequential dual ball}  \Rightarrow \mbox{weak*-sequentially compact dual ball} \\
\Downarrow ~~~~ & \hspace{3cm} \Downarrow \\
\mbox{property } \EE & \hspace{0.5cm} \Longrightarrow \hspace{0.5cm} \mbox{ property } \EE' \hspace{0.5cm} \Longrightarrow \hspace{0.5cm} \mbox{ property } (C) \\ 
& \hspace{3cm} \Downarrow \\
 & \hspace{2cm} \mbox{Mazur property}
\end{aligned}
$$
 
%The space $\CC(\lbrace 0,1 \rbrace^{\w_1})$ has the Mazur property but it does not have property $(C)$ (see \cite{BNP, Pol80}).
Notice that $\CC([0,\w_1])$ has weak*-sequentially compact dual ball but it is not weak*-sequential.    
Moreover, $\ell_1({\w_1})$ has the Mazur property \cite[Section 5]{Edg79} but it does not have property $(C)$. 

In \cite{PY00} it is asked whether  property (C) implies property $\EE$.
J.T. Moore in an unpublished paper and C. Brech in her PhD Thesis \cite{Brech08} provided a negative answer under some additional consistent axioms, but the question is still open in ZFC. Notice that the convex version of Plichko's question is whether property $\EE'$ implies property $\EE$. A negative answer to this question would provide an example of a Banach space with property (C) not having property $\EE$.

In \cite[Lemma 2.5]{FKK13} it is proved that the dual ball of $\CC(K)$ does not contain a copy of $\w_1+1=[0,\w_1]$ when $K$ is a scattered compact space of finite height satisfying some properties. 
It is also proved in \cite{Kap86} that $\CC(K)$ has the Mazur property whenever $K$ is a scattered compact space of countable height. We generalize these results by proving that $\CC(K)$ has weak*-sequential dual ball whenever $K$ is a scattered compact space of countable height (Theorem \ref{CoroC(K)}).

\section{Banach spaces with weak*-sequential dual ball}

\begin{DEFI}
Let $T$ be a topological space and $F$ a subspace of $T$. 
For any $\alpha \leq \w_1$ we define $S_\alpha(F)$ the $\alpha$th sequential closure of $F$  by induction on $\alpha$: $S_0(F)=F$, $S_{\alpha+1}(F)$ is the sequential closure of $S_\alpha(F)$ for every $\alpha< \w_1$ and $S_\alpha(F)=\bigcup_{\beta < \alpha} S_\beta (F)$ if $\alpha $ is a limit ordinal. 
\end{DEFI}

Notice that $S_{\w_1}(F)$ is sequentially closed for every subspace $F$. Thus, a topological space $T$ is sequential if and only if $S_{\w_1}(F)=\overline{F}$ for every subspace $F$ of $T$.
We say that $T$ has \textit{sequential order $\alpha$} if $S_{\alpha}(F)=\overline{F}$ for every subspace $F$ of $T$ and for every $\beta < \alpha$ there exists $F$ with $S_{\beta}(F) \neq \overline{F}$.
Therefore, a topological space $T$ is sequential with sequential order $\leq 1$ if and only if it is FU.
We will use the following Lemma in the proof of Theorem \ref{TeoBase}:

\begin{LEMM}
\label{LemmAuxClau}
Let $f:K \rto L$ be a continuous function, where $K, L$ are topological spaces and $K$ is sequentially compact. Then, $f(S_\alpha(F))=S_\alpha(f(F))$ for every $F\subset K$ and every ordinal $\alpha$.
\end{LEMM}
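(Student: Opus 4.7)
The plan is to proceed by transfinite induction on $\alpha$, with the base case $\alpha = 0$ being the trivial identity $f(F) = f(F)$, and the limit case being immediate from the definition $S_\alpha(F) = \bigcup_{\beta < \alpha} S_\beta(F)$ together with the fact that $f$ commutes with unions. The only substantive step is the successor case, where I would prove the two inclusions separately.

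For the inclusion $f(S_{\alpha+1}(F)) \subseteq S_{\alpha+1}(f(F))$, I would pick $y = f(x)$ with $x = \lim_n x_n$ and $x_n \in S_\alpha(F)$. By continuity of $f$, $f(x_n) \to f(x) = y$, and by the inductive hypothesis $f(x_n) \in f(S_\alpha(F)) = S_\alpha(f(F))$; this exhibits $y$ as the limit of a sequence in $S_\alpha(f(F))$, so $y \in S_{\alpha+1}(f(F))$. Notice that this direction uses only continuity, not sequential compactness.

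The reverse inclusion $S_{\alpha+1}(f(F)) \subseteq f(S_{\alpha+1}(F))$ is where sequential compactness of $K$ is essential, and I expect this to be the main (only) obstacle. Given $y \in S_{\alpha+1}(f(F))$, write $y = \lim_n y_n$ with $y_n \in S_\alpha(f(F)) = f(S_\alpha(F))$ by induction, so choose $x_n \in S_\alpha(F)$ with $f(x_n) = y_n$. A priori the $x_n$ need not converge (or even have a limit in $K$), and this is precisely where sequential compactness enters: it supplies a subsequence $x_{n_k} \to x \in K$. By continuity $f(x_{n_k}) = y_{n_k} \to f(x)$, and since $y_{n_k} \to y$ we conclude $f(x) = y$. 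Moreover $x$ is a limit of a sequence in $S_\alpha(F)$ (namely $x_{n_k}$), so $x \in S_{\alpha+1}(F)$ and $y \in f(S_{\alpha+1}(F))$, as required.

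Finally, the limit ordinal case is just
\[
f(S_\alpha(F)) = f\!\left(\bigcup_{\beta<\alpha} S_\beta(F)\right) = \bigcup_{\beta<\alpha} f(S_\beta(F)) = \bigcup_{\beta<\alpha} S_\beta(f(F)) = S_\alpha(f(F)),
\]
using the inductive hypothesis in the third equality. This closes the induction and completes the proof.
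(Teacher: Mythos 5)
Your proof is correct and follows essentially the same strategy as the paper's: transfinite induction in which sequential compactness is used exactly once, to lift a convergent sequence $y_n=f(x_n)$ back to a convergent subsequence of the $x_n$. The only cosmetic difference is that the paper isolates this argument as the case $\alpha=1$ and then reduces the successor step to it via $S_{\beta+1}=S_1\circ S_\beta$, whereas you carry out the same argument directly at each successor step.
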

\begin{proof}
The inclusion $f(S_\alpha(F)) \subset S_\alpha(f(F))$ follows from the continuity of $f$.

We prove the other inclusion by induction on $\alpha$.
The case $\alpha=0$ is immediate.
Suppose $\alpha=1$. Take $s \in S_1(f(F))$. Then, there exists a sequence $t_n$ in $F$ such that $f(t_n)$ converges to $s$. Since $K$ is sequentially compact, without loss of generality we may suppose $t_n$ is converging to some point $t$. Then, it follows from the continuity of $f$ that $f(t)=s$. Thus, $s \in f(S_1(F))$.

Now suppose the result true for every $\beta < \alpha$ and $\alpha \geq 2$. If $\alpha$ is a limit ordinal then
$$ f(S_\alpha(F))= f( \bigcup_{\beta < \alpha}S_\beta(F))=\bigcup_{\beta < \alpha}f(S_\beta(F))=\bigcup_{\beta < \alpha}S_\beta(f(F))=S_\alpha (f(F)).$$

If $\alpha=\beta+1$ is a successor ordinal then
$$f(S_\alpha(F))=f(S_1(S_\beta(F)))=S_1(f(S_\beta(F)))=S_1(S_\beta(f(F)))=S_\alpha(f(F)).$$
	
\end{proof}

\textit{Proof of Theorem 1.}
It is enough to prove that if $F\subset B_{X^\ast}$ and $0 \in \overline{F}^{\w^\ast}$ then $0 \in S_{\gamma_1+\gamma_2}(F)$.
Let $R: X^\ast \rto Y^\ast$ be the restriction operator. 
For each finite set $A \subset X$ and each $\varepsilon >0$, define
$$ F_{A, \varepsilon} = \lbrace x^\ast \in F: |x^\ast (x)| \leq \varepsilon \mbox{ for all } x \in A \rbrace.$$ 
Since $R$ is weak*-weak* continuous and $0 \in \overline{F_{A,\varepsilon}}^{\w^\ast}$, we have that $$0 \in \overline{R(F_{A,\varepsilon})}^{\w^\ast}=S_{\gamma_1}(R(F_{A, \varepsilon}))=R(S_{\gamma_1}(F_{A, \varepsilon})),$$
where the last equality follows from Lemma \ref{LemmAuxClau}.

Thus, for every finite set $A\subset X$ and every $\varepsilon >0$ we can take $x_{A,\varepsilon}^\ast \in S_{\gamma_1}(F_{A, \varepsilon})$ such that $R(x_{A,\varepsilon}^\ast)=0$.

Therefore, $0 \in \overline{G}^{\w^\ast}$, where
$$ G:= \lbrace x_{A, \varepsilon}^\ast : A \subset X \mbox{ finite, } \varepsilon >0 \rbrace \subset Y^\perp \cap B_{X^\ast}.$$

Note that $(Y^\perp \cap B_{X^\ast}, \w^\ast)$ is homeomorphic to the dual ball of $(X/Y)^\ast$. Hence $$0 \in S_{\gamma_2}(G) \subset S_{\gamma_2}( S_{\gamma_1}(F)) = S_{\gamma_1+ \gamma_2}(F).$$
\hfill\ensuremath{\square}

If $(x_n)_{n<\w}$ is a sequence in a Banach space, we say that $(y_k)_{k<\w}$ is a \textit{convex block subsequence} of $(x_n)_{n<\w}$ if there is a sequence $(I_k)_{k<\w}$ of subsets of $\w$  with $\max(I_k)< \min(I_{k+1})$ and a sequence $a_n \in [0,1]$ with $\sum_{n \in I_k} a_n = 1$ for every $k<\w$ such that
$y_k = \sum_{n \in I_k} a_n x_n$.
A Banach space $X$ is said to have \textit{weak*-convex block compact dual ball} if every bounded sequence in $X^\ast$ has a weak*-convergent convex block subsequence.
Every Banach space containing no isomorphic copies of $\ell_1$ has weak*-convex block compact dual ball \cite{Bou79}. Therefore, every WPG Banach space (i.e. every Banach space with a linearly dense weakly precompact set) also has weak*-convex block compact dual ball.	

For any ordinal $\gamma \leq \w_1$, we say that $X$ has \textit{property $\EE(\alpha)$} if $S_\alpha(C)=C$ for every convex subset $C$ in $(B_{X^\ast}, \w^\ast)$. Thus, property $\EE$ is property $\EE(1)$ and property $\EE'$ is property $\EE(\w_1)$.
The proof of the following theorem is an immediate adaptation of the proof of Lemma \ref{LemmAuxClau} and Theorem \ref{TeoBase}. 

\begin{THEO}
\label{TeoConv}
Let $X$ be a Banach space with weak*-convex block compact dual ball.
Let $Y \subset X$ be a subspace with property $\EE(\gamma_1)$ such that $X/Y$ has property $\EE(\gamma_2)$. Then $X$ has property $\EE(\gamma_1+\gamma_2)$.
\end{THEO}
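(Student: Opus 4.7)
The plan is to adapt both Lemma \ref{LemmAuxClau} and the proof of Theorem \ref{TeoBase} to the convex setting, replacing weak*-sequential compactness by weak*-convex block compactness.

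First I would prove a convex analog of Lemma \ref{LemmAuxClau}: if $K$ is a bounded weak*-convex block compact set, $f:K \rto L$ is weak*-weak* continuous and linear, and $C \sub K$ is convex, then $f(S_\alpha(C)) = S_\alpha(f(C))$ for every ordinal $\alpha$. The inclusion $f(S_\alpha(C)) \sub S_\alpha(f(C))$ follows from the continuity of $f$, as before. The key preliminary observation for the reverse is that sequential closure preserves convexity: by a straightforward transfinite induction (convex combinations of weak*-limits are weak*-limits of convex combinations), $S_\alpha(C)$ is convex whenever $C$ is. At the base step $\alpha=1$, given $s = \lim_n f(c_n) \in S_1(f(C))$ with $c_n \in C$, weak*-convex block compactness yields a convex block subsequence $d_k = \sum_{n \in I_k} a_n c_n$ weak*-converging to some $t \in K$. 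Convexity of $C$ gives $d_k \in C$, so $t \in S_1(C)$; linearity and continuity of $f$, together with the fact that a convex block subsequence of a weak*-convergent sequence inherits its limit, force $f(t) = s$. Successor steps apply this $\alpha=1$ argument to the convex set $S_\beta(C)$, and limit steps are formal.

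Next I would follow the Theorem \ref{TeoBase} scheme. Fix a convex $C \sub B_{X^\ast}$ and $x_0^\ast \in \overline{C}^{\w^\ast}$; the goal is $x_0^\ast \in S_{\gamma_1+\gamma_2}(C)$. For each finite $A \sub X$ and each $\eps>0$, set
$$ C_{A,\eps} := \lbrace x^\ast \in C : |x^\ast(x) - x_0^\ast(x)| \le \eps \text{ for all } x \in A \rbrace, $$
a convex set with $x_0^\ast \in \overline{C_{A,\eps}}^{\w^\ast}$. Let $R: X^\ast \rto Y^\ast$ be the restriction map; then $R(C_{A,\eps})$ is convex with $R(x_0^\ast)$ in its weak*-closure, so property $\EE(\gamma_1)$ of $Y$ combined with the convex lifting lemma yield
$$ R(x_0^\ast) \in S_{\gamma_1}(R(C_{A,\eps})) = R(S_{\gamma_1}(C_{A,\eps})). $$
Pick $x^\ast_{A,\eps} \in S_{\gamma_1}(C_{A,\eps})$ with $R(x^\ast_{A,\eps}) = R(x_0^\ast)$, so that $x^\ast_{A,\eps}-x_0^\ast \in Y^\perp$, and let
$$ G := \lbrace x^\ast_{A,\eps} - x_0^\ast : A \sub X \text{ finite},\ \eps>0 \rbrace \sub Y^\perp. $$
Then $0 \in \overline{G}^{\w^\ast}$. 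Since $S_{\gamma_1}(C) - x_0^\ast = S_{\gamma_1}(C - x_0^\ast)$ is convex, $\co(G) \sub S_{\gamma_1}(C - x_0^\ast)$. Viewing $\co(G)$ as a bounded convex subset of $(X/Y)^\ast$ via the weak*-homeomorphism $Y^\perp \cong (X/Y)^\ast$ (rescaling into $B_{(X/Y)^\ast}$ if necessary), property $\EE(\gamma_2)$ of $X/Y$ gives
$$ 0 \in S_{\gamma_2}(\co(G)) \sub S_{\gamma_2}(S_{\gamma_1}(C - x_0^\ast)) = S_{\gamma_1+\gamma_2}(C - x_0^\ast), $$
i.e.\ $x_0^\ast \in S_{\gamma_1+\gamma_2}(C)$.

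The main subtlety I expect is keeping convexity alive throughout the argument so that both properties $\EE(\gamma_i)$ can be applied. The convex lifting lemma needs each $S_\beta(C)$ to be convex in its successor-step induction, and the lifted set $G$ is not convex in general, so one must replace it by $\co(G)$ before invoking property $\EE(\gamma_2)$ and separately verify that this convex hull lies in $S_{\gamma_1}(C - x_0^\ast)$. Apart from this, the argument is a formal bookkeeping adaptation of the proof of Theorem \ref{TeoBase}.
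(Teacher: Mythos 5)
Your proof is correct and is exactly the adaptation of Lemma \ref{LemmAuxClau} and Theorem \ref{TeoBase} that the paper invokes (the paper gives no further details), with the convexity bookkeeping — preservation of convexity under iterated sequential closure, the fact that a convex block subsequence of a weak*-convergent sequence keeps the same limit, and the replacement of $G$ by $\co(G)$ inside the convex set $S_{\gamma_1}(C-x_0^\ast)$ — supplied where needed.
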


\begin{THEO}
Let $X$ be a Banach space and $(X_n)_{n < \w}$ an increasing sequence of subpaces with $X= \overline{\bigcup_{n < \w} X_n}$. 
Suppose that each $X_n$ has weak*-sequential dual ball with sequential order $\alpha_n$.  Then  $X$ has weak*-sequential dual ball with sequential order 
$\leq \alpha +1$, where $\alpha:= \sup \lbrace \alpha_n :n<\w \rbrace$. 
\end{THEO}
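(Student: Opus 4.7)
The plan is to adapt the proof of Theorem \ref{TeoBase}, replacing the single restriction $R : X^\ast \to Y^\ast$ by the countable family of restriction operators $R_n : X^\ast \to X_n^\ast$. Given $F \subset B_{X^\ast}$ and $x^\ast \in \overline{F}^{\ws}$, the goal is to exhibit a weak*-convergent sequence $(y_n^\ast) \subset S_\alpha(F)$ with limit $x^\ast$, which will place $x^\ast$ in $S_{\alpha+1}(F)$ and thereby show that the sequential order of $B_{X^\ast}$ is at most $\alpha+1$.

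Before applying Lemma \ref{LemmAuxClau} to each $R_n$, one needs $(B_{X^\ast},\ws)$ to be sequentially compact. This follows from a standard diagonal argument: given a bounded sequence in $X^\ast$, successive extractions using the $\ws$-sequential compactness of each $B_{X_n^\ast}$ produce a subsequence converging pointwise on $\bigcup_n X_n$, and the density of $\bigcup_n X_n$ in $X$ together with uniform boundedness promotes this to pointwise convergence on all of $X$. With this in hand, each $R_n$ satisfies $\overline{R_n(F)}^{\ws} = S_{\alpha_n}(R_n(F)) = R_n(S_{\alpha_n}(F))$, using the hypothesis on $X_n$ together with Lemma \ref{LemmAuxClau}. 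Since $R_n$ is $\ws$-$\ws$ continuous, $R_n(x^\ast) \in \overline{R_n(F)}^{\ws}$, so one can pick $y_n^\ast \in S_{\alpha_n}(F) \subset S_\alpha(F)$ with $R_n(y_n^\ast) = R_n(x^\ast)$, that is, with $y_n^\ast - x^\ast \in X_n^\perp$.

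It remains to verify that $y_n^\ast \to x^\ast$ in the weak* topology. If $x \in X_m$ and $n \geq m$, then $y_n^\ast - x^\ast \in X_n^\perp \subset X_m^\perp$ annihilates $x$, so $y_n^\ast(x) \to x^\ast(x)$ for every $x \in \bigcup_m X_m$. Extending via uniform boundedness of $\{y_n^\ast - x^\ast\} \subset 2 B_{X^\ast}$ and density of $\bigcup_m X_m$ in $X$ yields convergence on all of $X$, so $y_n^\ast \to x^\ast$ weak* and hence $x^\ast \in S_{\alpha+1}(F)$. I expect the main technical obstacle to be the preliminary step of deriving weak*-sequential compactness of $B_{X^\ast}$ from that of the $B_{X_n^\ast}$, without which Lemma \ref{LemmAuxClau} cannot be invoked; once this is secured, the observation that the perpendiculars $X_n^\perp$ annihilate larger and larger dense portions of $X$ makes the convergence of any admissible choice of $y_n^\ast$ essentially automatic.
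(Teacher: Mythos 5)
Your proposal is correct and follows essentially the same route as the paper: reduce to the restriction operators $R_n$, use the sequential compactness of $(B_{X^\ast},\w^\ast)$ (which the paper gets by embedding into $\prod_n B_{X_n^\ast}$ rather than by your equivalent diagonal argument) to apply Lemma \ref{LemmAuxClau}, and pick $y_n^\ast \in S_\alpha(F)$ agreeing with $x^\ast$ on $X_n$. The only cosmetic difference is at the end: the paper extracts a weak*-convergent subsequence and identifies its limit via $R_n(x^\ast)=0$, whereas you observe directly that the whole sequence converges by density and uniform boundedness; both are valid.
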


\begin{proof}
Set $R_n: X^\ast \rto X_n^\ast$ the restriction operator for every $n < \w$.
Since the countable product of sequentially compact spaces is sequentially compact and $(B_{X^\ast}, \w^\ast)$ is homeomorphic to a subspace of $\prod (B_{X_n^\ast}, \w^\ast)$, it follows that $X$ has weak*-sequentially compact dual ball.
In order to prove the theorem, it is enough to prove that if $F \subset B_{X^\ast}$ and $0 \in \overline{F}^{\w^\ast}$ then $0 \in S_{\alpha+1}(F)$.	
Since $B_{X^\ast}$ is weak*-sequentially compact, we have that $0 \in \overline{R_n(F)}^{\w^\ast} = S_{\alpha}(R_n(F))=  R_n(S_{\alpha}(F))$ for every $n < \w$, where the last equality follows from Lemma \ref{LemmAuxClau}.
Thus, we can take a sequence $x_n^\ast \in S_{\alpha}(F)$ such that $R_n(x_n^\ast)=0$.
Now there exists some subsequence of $x_n^\ast$ converging to a point $x^\ast \in S_{\alpha+1}(F)$. Since $R_n(x^\ast)=0$ for every $n<\w$, we conclude that $x^\ast=0$.
\end{proof}

\begin{CORO}
\label{CoroIncreasing}
	Let $X$ be a Banach space and $(X_\alpha)_{\alpha < \gamma}$ an increasing sequence of subpaces with $X= \overline{\bigcup_{\alpha < \gamma} X_\alpha}$, where $\gamma$ is a countable limit ordinal. 
	Suppose that each $X_\alpha$ has weak*-sequential dual ball with sequential order $\leq \theta_\alpha$. Then  $X$ has weak*-sequential dual ball with sequential order 
	$\leq \theta+1$ where $\theta:= \sup \lbrace \theta_\alpha : \alpha< \gamma \rbrace $.
\end{CORO}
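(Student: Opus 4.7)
The plan is to reduce this to the $\omega$-indexed version proved in the previous theorem by exploiting the fact that every countable limit ordinal has cofinality $\omega$.

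First I would choose a strictly increasing sequence $(\alpha_n)_{n<\w}$ of ordinals below $\gamma$ which is cofinal in $\gamma$; such a sequence exists because $\gamma$ is a countable limit ordinal. Setting $Y_n := X_{\alpha_n}$, I obtain an increasing sequence of subspaces indexed by $\w$. Since the original family $(X_\alpha)_{\alpha<\gamma}$ is increasing and $(\alpha_n)_n$ is cofinal in $\gamma$, one has
$$
\bigcup_{n<\w} Y_n = \bigcup_{n<\w} X_{\alpha_n} = \bigcup_{\alpha<\gamma} X_\alpha,
$$
hence $X = \overline{\bigcup_{n<\w} Y_n}$.

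Next, each $Y_n = X_{\alpha_n}$ has weak*-sequential dual ball with sequential order $\leq \theta_{\alpha_n} \leq \theta$. Applying the preceding theorem to the sequence $(Y_n)_{n<\w}$ with the uniform bound $\theta$ on their sequential orders, I conclude that $X$ has weak*-sequential dual ball with sequential order $\leq \theta+1$, which is exactly the claim.

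There is essentially no obstacle beyond verifying that countable limit ordinals have countable cofinality (standard) and that cofinality of the indexing is enough to preserve the density of the union (immediate from monotonicity of the family). The whole content of the corollary is the packaging of the previous $\w$-indexed theorem together with this elementary ordinal-theoretic remark.
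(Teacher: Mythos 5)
Your proof is correct and is precisely the reduction the paper intends: the corollary is stated without proof immediately after the $\w$-indexed theorem, because the whole content is the passage to a cofinal $\w$-sequence in the countable limit ordinal $\gamma$ and the observation that, by monotonicity, the union (hence its closure) is unchanged. Nothing is missing.
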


The next theorem follows from combining Theorem \ref{TeoBase} and Corollary \ref{CoroIncreasing}	:

\begin{THEO}
\label{Thm}

Let $\gamma$ be a countable ordinal, $X_\gamma$ a Banach space and $(X_\alpha)_{\alpha \leq \gamma}$  an increasing sequence of subpaces of $X_\gamma$ such that:
\begin{enumerate}
	\item $X_0$ has weak*-sequential dual ball with sequential order $\leq \theta$;
	\item each quotient $X_{\alpha+1}/X_{\alpha}$ has weak*-angelic dual;
	\item $X_\alpha = \overline{\bigcup_{\beta < \alpha} X_\beta}$ if $\alpha$ is a limit ordinal;
	\item $X_\gamma$ has weak*-sequentially compact dual ball.
\end{enumerate}	 
Then each $X_\alpha$ has weak*-sequential dual ball with sequential order
$\leq \theta +\alpha$ if $\alpha < \w$ and sequential order $\leq \theta+\alpha +1$ if $\alpha \geq \omega$.
\end{THEO}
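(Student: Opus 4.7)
My plan is to prove the statement by transfinite induction on $\alpha \leq \gamma$, combining the base case from hypothesis (1), the successor case via Theorem \ref{TeoBase}, and the limit case via Corollary \ref{CoroIncreasing}. Before starting the induction, I would record a preliminary observation: for each $\alpha \leq \gamma$, the space $X_\alpha$ has weak*-sequentially compact dual ball. Indeed, since $X_\alpha \subset X_\gamma$, the restriction operator $R_\alpha:X_\gamma^\ast \to X_\alpha^\ast$ is weak*-weak* continuous, and by Hahn-Banach every norm-one functional in $X_\alpha^\ast$ lifts to $B_{X_\gamma^\ast}$. Given a sequence in $B_{X_\alpha^\ast}$, one lifts it, extracts a weak*-convergent subsequence using hypothesis (4), and restricts to obtain a weak*-convergent subsequence in $B_{X_\alpha^\ast}$. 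This ensures that Theorem \ref{TeoBase} will always be applicable.

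The base case $\alpha=0$ is hypothesis (1). For the successor step, assume $X_\alpha$ has weak*-sequential dual ball with the bound $\sigma_\alpha$ asserted by the theorem. Apply Theorem \ref{TeoBase} with the pair $X_\alpha \subset X_{\alpha+1}$: by the preliminary observation $X_{\alpha+1}$ has weak*-sequentially compact dual ball, the subspace $X_\alpha$ has weak*-sequential dual ball with sequential order $\leq \sigma_\alpha$, and by (2) the quotient $X_{\alpha+1}/X_\alpha$ has weak*-angelic dual and hence weak*-sequential dual ball with sequential order $\leq 1$. Theorem \ref{TeoBase} then yields weak*-sequential dual ball for $X_{\alpha+1}$ with sequential order $\leq \sigma_\alpha + 1$. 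A short ordinal computation shows that $\sigma_\alpha+1$ matches the bound $\sigma_{\alpha+1}$ claimed in the statement: if $\alpha<\omega$ is finite then $\sigma_\alpha = \theta+\alpha$ so $\sigma_\alpha+1 = \theta+(\alpha+1)$; if $\alpha\geq\omega$ then $\sigma_\alpha = \theta+\alpha+1$ and $\sigma_\alpha+1 = \theta+(\alpha+1)+1$.

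For the limit step, let $\alpha\leq \gamma$ be a countable limit ordinal. By (3) we have $X_\alpha = \overline{\bigcup_{\beta<\alpha} X_\beta}$, and by the induction hypothesis each $X_\beta$ ($\beta<\alpha$) has weak*-sequential dual ball with sequential order $\leq \sigma_\beta$. Corollary \ref{CoroIncreasing} then gives that $X_\alpha$ has weak*-sequential dual ball with sequential order $\leq \theta^\ast + 1$ where $\theta^\ast = \sup_{\beta<\alpha}\sigma_\beta$. Since $\alpha\geq\omega$, the supremum evaluates to $\theta+\alpha$ (ordinal addition is continuous in the right variable, and the ``$+1$'' terms appearing for $\beta\geq\omega$ are absorbed by the supremum at the limit $\alpha$), so the resulting bound is $\theta+\alpha+1 = \sigma_\alpha$, as required.

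The main obstacle I anticipate is not any deep technical difficulty but the careful verification of the ordinal-arithmetic bookkeeping at the two regimes $\alpha<\omega$ and $\alpha\geq\omega$, and in particular checking that $\sup_{\beta<\alpha}(\theta+\beta+1)=\theta+\alpha$ when $\alpha$ is a countable limit, so that the $+1$ accumulated through successor steps does not drift across limit stages. Once this is handled, the three-case transfinite induction closes cleanly using only Theorem \ref{TeoBase}, Corollary \ref{CoroIncreasing}, and the Hahn-Banach lifting argument securing weak*-sequential compactness of each $B_{X_\alpha^\ast}$.
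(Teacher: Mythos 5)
Your proposal is correct and follows essentially the same route as the paper: the preliminary observation that hypothesis (4) passes weak*-sequential compactness down to every $X_\alpha$, then transfinite induction with Theorem \ref{TeoBase} at successor steps and Corollary \ref{CoroIncreasing} at limit steps, together with the ordinal computation $\sup_{\beta<\alpha}(\theta+\beta+1)=\theta+\alpha$ at countable limits. Your write-up is in fact somewhat more explicit than the paper's about the Hahn--Banach lifting and the two-regime bookkeeping, but there is no substantive difference.
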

\begin{proof}
It follows from $(4)$ that every $X_\alpha$ has weak*-sequentially compact dual ball. Thus,  
the result for $\alpha < \w$ follows by applying inductively Theorem \ref{TeoBase}.
Suppose $\alpha \geq \w$ and $X_\beta$ has weak*-sequential dual ball with sequential order $\leq \theta+\beta+1$ for every $\beta < \alpha$. If $\alpha$ is a limit ordinal then it follows from (3) and from Corollary \ref{CoroIncreasing} that $X_\alpha$ has weak*-sequential dual ball with sequential order $\leq \sup_{\beta<\alpha}\lbrace \theta+\beta+1 \rbrace +1=\theta+\alpha+1$. If $\alpha$ is a successor ordinal then the result is a consequence of Theorem \ref{TeoBase}.	
\end{proof}

We also have the following convex equivalent version of the previous theorem:

\begin{THEO}	
	Let $\gamma$ be a countable ordinal, $X_\gamma$ a Banach space and $(X_\alpha)_{\alpha \leq \gamma}$ an increasing sequence of subpaces of $X_\gamma$ such that:
	\begin{enumerate}
		\item $X_0$ has property $\EE(\theta)$;
		\item each quotient $X_{\alpha+1}/X_{\alpha}$ has $\EE$;
		\item $X_\alpha = \overline{\bigcup_{\beta < \alpha} X_\beta}$ if $\alpha$ is a limit ordinal;
		\item $X_\gamma$ has weak*-convex block compact dual ball.
	\end{enumerate}	 
	Then each $X_\alpha$ has property $\EE(\theta+\alpha)$ if $\alpha < \w$ and  property $\EE( \theta+\alpha +1)$ if $\alpha \geq \omega$.
\end{THEO}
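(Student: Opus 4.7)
The plan is to mirror the proof of the preceding (non-convex) theorem, replacing each ingredient by its convex analogue. Specifically, at successor stages I will invoke Theorem \ref{TeoConv} in place of Theorem \ref{TeoBase}, and at countable-limit stages I will use a convex version of Corollary \ref{CoroIncreasing} whose proof adapts the one given for that corollary, exactly as Theorem \ref{TeoConv} adapts Theorem \ref{TeoBase}.

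First I would verify that condition $(4)$ is hereditary: given a bounded sequence $(y_n^\ast)$ in $X_\alpha^\ast$, take norm-preserving Hahn--Banach extensions $(\tilde y_n^\ast)$ to $X_\gamma^\ast$; a weak*-convergent convex block subsequence of $(\tilde y_n^\ast)$ restricts to a weak*-convergent convex block subsequence of $(y_n^\ast)$, so each $X_\alpha$ has weak*-convex block compact dual ball. Next I would establish the convex analogue of Lemma \ref{LemmAuxClau}: if $f:K\to L$ is weak*-weak* continuous and linear and $K$ is weak*-convex block compact, then $f(S_\alpha(C))=S_\alpha(f(C))$ for every convex $C\subset K$ and every $\alpha<\w_1$. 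The only new point is the inclusion $\supset$ at $\alpha=1$: if $f(t_n)\to s$ with $t_n\in C$, extract a weak*-convergent convex block subsequence $y_k=\sum_{n\in I_k}a_n t_n$ of $(t_n)$, which lies in $C$ by convexity; by linearity of $f$, its image is the matching convex block of $(f(t_n))$ and so still tends to $s$, hence the limit $t$ of $(y_k)$ lies in $S_1(C)$ with $f(t)=s$. The successor and limit cases then go through as in Lemma \ref{LemmAuxClau} once one observes, by an obvious induction, that $S_\beta(C)$ is convex whenever $C$ is.

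With these tools the convex version of Corollary \ref{CoroIncreasing} follows by repeating the limit-ordinal argument used to prove it: for convex $C\subset B_{X_\alpha^\ast}$ with $0\in\overline{C}^{\w^\ast}$ at a countable limit ordinal $\alpha$, fix a cofinal sequence $(\beta_n)_{n<\w}$ in $\alpha$ and let $R_n:X_\alpha^\ast\to X_{\beta_n}^\ast$ be the restriction; use property $\EE(\theta)$ on $X_{\beta_n}$ together with the convex version of Lemma \ref{LemmAuxClau} to produce $x_n^\ast\in S_\theta(C)$ with $R_n(x_n^\ast)=0$, then extract a weak*-convergent convex block subsequence whose limit $x^\ast$ lies in $S_{\theta+1}(C)$ by convexity of $S_\theta(C)$ and vanishes on a dense subspace of $X_\alpha$ by $(3)$, forcing $x^\ast=0$. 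The final induction on $\alpha$ now runs exactly as in the preceding theorem: Theorem \ref{TeoConv} applied to $Y=X_\alpha$ and $X_{\alpha+1}/X_\alpha$ (which has $\EE=\EE(1)$) handles every successor step, while the convex version of Corollary \ref{CoroIncreasing} handles every limit $\alpha\geq\w$, yielding $\EE(\sup_{\beta<\alpha}(\theta+\beta+1)+1)=\EE(\theta+\alpha+1)$. The main obstacle is simply bookkeeping: one has to check that every sequential closure used along the way remains convex, so that convex block subsequences stay inside it, and that linearity of the restriction maps makes convex blocks commute with images in the same way sequential compactness made ordinary subsequences commute with images in Lemma \ref{LemmAuxClau}.
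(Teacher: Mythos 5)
Your proof is correct and is precisely the adaptation the paper intends: the paper states this theorem without proof as the convex version of Theorem \ref{Thm}, and you supply exactly the needed ingredients (hereditary weak*-convex block compactness, the convex-block analogue of Lemma \ref{LemmAuxClau} resting on the convexity of the iterated sequential closures $S_\beta(C)$, and the convex analogue of Corollary \ref{CoroIncreasing}). The only cosmetic slip is writing $\EE(\theta)$ for the property of $X_{\beta_n}$ in your description of the limit stage, where the relevant hypothesis is $\EE(\theta+\beta_n+1)$; your final bookkeeping $\sup_{\beta<\alpha}(\theta+\beta+1)+1=\theta+\alpha+1$ is nonetheless correct.
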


\section{Applications}

As an application of Theorem \ref{TeoBase}, we obtain that the Johnson-Lindenstrauss space $JL_2$ has weak*-sequential dual ball. Let us recall the definition of $JL_2$:

Let $\lbrace N_r: r \in \Gamma \rbrace$ be an uncountable maximal almost disjoint family of infinite subsets of $\w$. For each $N_r$, $\chi_{N_r} \in \ell_\infty$ denotes the characteristic function of $N_r$.
The Johnson-Lindenstrauss space $JL_2$ is defined as the completion of $\spn \( c_0 \cup \lbrace \chi_{N_r}: r \in \Gamma \rbrace \) \subset \ell_\infty $ with respect to the norm:
$$ \norm[\bigg]{x+  \sum_{1 \leq i \leq k} a_i \chi_{N_{r_i}}}= \max \bigg\lbrace \norm[\bigg]{x+  \sum_{1 \leq i \leq k} a_i \chi_{N_{r_i}} }_\infty, \bigg({\sum_{1 \leq i \leq k} |a_i|^2}\bigg)^{\frac{1}{2}} \bigg\rbrace ,$$
where $x \in c_0$ and $\| \cdot \|_\infty$ is the supremum norm in $\ell_\infty$.
If we just consider the supremum norm in the definition then we obtain the space $JL_0$.
We refer the reader to \cite{JL74} for more information about these spaces.

\begin{THEO}
\label{JL}
The Johnson-Lindenstrauss space $JL_2$ has weak*-sequential dual ball with sequential order $2$.
\end{THEO}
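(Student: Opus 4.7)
\textit{Proof plan.} My strategy is to apply Theorem~\ref{TeoBase} to the canonical short exact sequence
\[ 0\to c_0\to JL_2\to \ell_2(\Gamma)\to 0, \]
and then to witness sharpness by a direct combinatorial argument using the almost disjoint family $\{N_r\}$. For the hypotheses of Theorem~\ref{TeoBase}: $c_0$ has separable dual $\ell_1$, so $B_{\ell_1}$ is weak*-metrizable, hence weak*-FU, giving sequential order $\leq 1$. The space $\ell_2(\Gamma)$ is reflexive, so its dual ball coincides with $B_{\ell_2(\Gamma)}$ in its (weak equals weak*) topology, which is angelic by the Eberlein--\v{S}mulian theorem, again of order $\leq 1$. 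For weak*-sequential compactness of $B_{JL_2^\ast}$, I would invoke the three-space stability of Asplundness: since both $c_0$ and $\ell_2(\Gamma)$ are Asplund, so is $JL_2$, and every Asplund space contains no isomorphic copy of $\ell_1$, which by \cite[Chapter~XIII]{Diest} implies the desired weak*-sequential compactness. Theorem~\ref{TeoBase} then yields sequential order $\leq 1+1=2$.

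To see the order is exactly $2$ I would exhibit a subset $F\subset B_{JL_2^\ast}$ with $0\in\overline{F}^{\w^\ast}\setminus S_1(F)$. Take $F=\{e_n^\ast:n<\w\}$, the coordinate evaluations restricted to $JL_2$. Maximality of $\{N_r:r\in\Gamma\}$ guarantees that $\w\setminus\bigcup_{i\leq k}N_{r_i}$ is infinite for every finite $\{r_1,\dots,r_k\}\subset\Gamma$, because any further $N_{r_{k+1}}$ meets it in a cofinite portion of $N_{r_{k+1}}$. Consequently the filter generated by $\{\w\setminus N_r:r\in\Gamma\}$ together with the cofinite filter on $\w$ is proper; any ultrafilter refining it delivers a weak*-limit of $(e_n^\ast)$ which vanishes on $c_0$ and on every $\chi_{N_r}$, hence on the dense subspace $\spn(c_0\cup\{\chi_{N_r}\})$, so the limit equals $0$. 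This places $0$ in $\overline{F}^{\w^\ast}$.

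On the other hand, no sequence from $F$ can weak*-converge to $0$: if $e_{n_k}^\ast\to x^\ast$ along a subsequence, maximality forces some $N_r$ to meet $\{n_k\}$ in an infinite set, and in order for $(\chi_{N_r}(n_k))_k$ to converge at all the whole tail of $(n_k)$ must lie in $N_r$. Almost disjointness then gives $x^\ast=e_r^{\ast\ast}$, the functional that vanishes on $c_0$ and sends $\chi_{N_{r'}}$ to $\delta_{r,r'}$; this is nonzero. Therefore $0\notin S_1(F)$, the dual ball is not weak*-FU, and the sequential order is exactly $2$. The main technical content is the three-space stability of Asplundness, needed to secure weak*-sequential compactness of $B_{JL_2^\ast}$; once that is in hand, everything else reduces to Theorem~\ref{TeoBase} together with the almost-disjoint combinatorics above.
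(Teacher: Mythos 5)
Your overall architecture is the same as the paper's: apply Theorem~\ref{TeoBase} to the pair $c_0\subset JL_2$ with quotient $JL_2/c_0\cong\ell_2(\Gamma)$, note that both ends have weak*-FU dual ball (sequential order $\leq 1$), and conclude order $\leq 1+1=2$; then show the order is not $1$. Where you genuinely differ is at the end: the paper simply cites \cite[Proposition 5.12]{Edg79} for the failure of weak*-angelicity, whereas you prove it directly with the coordinate functionals $F=\{e_n^\ast\}$. That argument is correct and is a nice self-contained addition: $\w\setminus\bigcup_{i\leq k}N_{r_i}$ is infinite (because the family is infinite and almost disjoint), so $0\in\overline{F}^{\w^\ast}$; and any weak*-convergent injective subsequence $(e_{n_k}^\ast)$ must, by maximality, eventually live inside a single $N_r$, forcing the limit to take the value $1$ at $\chi_{N_r}$, hence to be nonzero. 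So $0\notin S_1(F)$ and the order is exactly $2$.

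There is, however, one step that as written would fail: your justification of weak*-sequential compactness of $B_{JL_2^\ast}$. You argue ``$JL_2$ is Asplund $\Rightarrow$ $JL_2$ contains no copy of $\ell_1$ $\Rightarrow$ $B_{JL_2^\ast}$ is weak*-sequentially compact, by \cite[Chapter XIII]{Diest}.'' The second implication is not a theorem in that chapter (nor anywhere): the results there give weak*-sequential compactness of $B_{X^\ast}$ when $X^\ast$ contains no copy of $\ell_1$, or when $X$ is (weak) Asplund, or when $X$ admits a smooth norm; the hypothesis ``$X$ contains no copy of $\ell_1$'' only yields weak*-\emph{convex block} compactness (Bourgain \cite{Bou79}), which is not enough for Theorem~\ref{TeoBase}. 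Note also that the ``no $\ell_1$ in the dual'' criterion is unavailable here, since $JL_2^\ast\cong\ell_1\oplus\ell_2(\Gamma)$ visibly contains $\ell_1$. The fix is immediate: either keep your Asplund route but cite the correct corollary of the Hagler--Johnson theorem (Asplund, or even weak Asplund, implies weak*-sequentially compact dual ball), together with a reference for the three-space property of Asplundness which you use without proof; or do as the paper does and use that $JL_2$ admits an equivalent Fr\'echet differentiable norm \cite{JL74}, which gives weak*-sequential compactness by Hagler--Sullivan \cite{HS80}. With that repair the proof is complete.
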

\begin{proof}
We use the following results proved in \cite{JL74}:
\begin{enumerate}
	\item [(i)] $JL_2$ has an equivalent Fr\'{e}chet differentiable norm;
	\item [(ii)] $JL_2/c_0$ is isometric to $\ell_2(\Gamma)$.
\end{enumerate}	

It follows from (i) that $JL_2$ has weak*-sequentially compact dual ball (cf. \cite{HS80}).
It follows from (ii) and Theorem \ref{TeoBase} that $JL_2$ has weak*-sequential dual ball with sequential order $\leq 2$.
Since $JL_2$ does not have weak*-angelic dual (cf. \cite[Proposition 5.12]{Edg79}) we have that $JL_2$ has weak*-sequential dual ball with sequential order 2.

\end{proof}

Theorem \ref{JL} provides an example of a Banach space with weak*-sequential dual ball which does not have weakly* angelic dual, answering a question of A. Plichko in \cite{Pli15}.

For a scattered compact space $K$, we denote by $ht(K)$ the height of $K$, i.e. the minimal ordinal $\gamma$ such that the  $\gamma$th Cantor-Bendixson derivative $K^{(\gamma)}$ is discrete. Since every Banach space with weak*-sequential dual ball has the Mazur property, the following theorem improves \cite[Theorem 4.1]{Kap86}:

\begin{THEO}
\label{CoroC(K)}	
If $K$ is an infinite scattered compact space with $ht(K)< \w_1$, then $\CC(K)$ has weak*-sequential dual ball with sequential order $\leq ht(K)$ if $ht(K)<\w$ and with sequential order $\leq ht(K)+1$ if $ht(K) \geq \w$.
\end{THEO}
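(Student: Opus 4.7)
The natural tool is the Cantor--Bendixson filtration. For $0 \leq \alpha \leq \gamma := ht(K)$, set $X_\alpha := \{f \in \CC(K) : f|_{K^{(\alpha)}} = 0\}$, so that $X_0 = \{0\}$ and $X_\gamma$ has finite codimension $|K^{(\gamma)}| < \infty$ in $\CC(K)$ (recall $K^{(\gamma)}$ is discrete and compact, hence finite). Three facts about this increasing chain are needed: (a) the quotient $X_{\alpha+1}/X_\alpha$ is isometric via restriction to $K^{(\alpha)}$ (with Tietze extension providing a section) to $c_0(K^{(\alpha)}\setminus K^{(\alpha+1)})$, which is WCG and hence has weak*-angelic dual; (b) at a limit ordinal $\alpha$, $X_\alpha = \overline{\bigcup_{\beta<\alpha} X_\beta}$---given $f \in X_\alpha$ and $\eps > 0$, compactness of the decreasing family $\{K^{(\beta)}\}_{\beta<\alpha}$ with intersection $K^{(\alpha)} \subset \{|f|<\eps\}$ yields some $\beta < \alpha$ with $K^{(\beta)} \subset \{|f|<\eps\}$, and subtracting a Tietze extension of $f|_{K^{(\beta)}}$ produces an $\eps$-close element of $X_\beta$; (c) $\CC(K)$ is Asplund (since $K$ is scattered) so has weak*-sequentially compact dual ball. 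Theorem \ref{Thm} applied with $\theta = 0$ then delivers that each $X_\alpha$ has sequential order $\leq \alpha$ if $\alpha < \omega$ and $\leq \alpha+1$ if $\alpha \geq \omega$.

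I would then treat three cases. Case (i), the base $\gamma = 1$: here $K^{(1)}$ is finite discrete, so $K$ decomposes as a finite disjoint union of one-point compactifications of discrete sets (plus finitely many isolated points), whence $\CC(K) \cong c_0(\Gamma) \oplus \R^N$ is WCG with weak*-angelic dual, giving order $\leq 1$. Case (ii), $\gamma = \gamma_0+1$ a successor with $\gamma_0 \geq 1$: then $K^{(\gamma_0)}$ has height exactly $1$ (its derivative $K^{(\gamma)}$ is discrete while it itself is not), so by Case (i) $\CC(K^{(\gamma_0)})$ has order $\leq 1$, and Theorem \ref{TeoBase} applied to $0 \to X_{\gamma_0} \to \CC(K) \to \CC(K^{(\gamma_0)}) \to 0$ gives $\CC(K)$ order $\leq 1 + \mathrm{ord}(X_{\gamma_0})$, which evaluates to $\gamma_0+1 = \gamma$ if $\gamma_0 < \omega$ and to $\gamma_0+2 = \gamma+1$ if $\gamma_0 \geq \omega$, both matching the claim.

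The main obstacle is Case (iii), the limit sub-case $\gamma \geq \omega$. Here chaining all the way up to $X_\gamma$ (order $\leq \gamma+1$) and adjoining the finite-dimensional quotient $\CC(K^{(\gamma)})$ through Theorem \ref{TeoBase} yields only $\gamma+2$, one too many. To save this $+1$, I would argue directly, imitating the proof of Theorem \ref{TeoBase} but with a separating-functions trick. Fix Urysohn functions $a_1,\ldots,a_m \in \CC(K)$ that separate the (finitely many) points of $K^{(\gamma)}$, a cofinal sequence $\alpha_n \nearrow \gamma$ with $\alpha_n < \gamma$, and given $F \subset B_{\CC(K)^*}$ with $0 \in \overline{F}^{\w^\ast}$ put $F_n := \{f \in F : |f(a_i)| \leq 1/n \text{ for all } i\}$ (so that $0 \in \overline{F_n}^{\w^\ast}$ too). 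Using that $X_{\alpha_n}$ has sequential order $\leq \alpha_n+1$ together with Lemma \ref{LemmAuxClau} applied to the restriction $\CC(K)^* \to X_{\alpha_n}^*$, one obtains $\mu_n \in S_{\alpha_n+1}(F_n) \subset S_{\alpha_n+1}(F)$ annihilating $X_{\alpha_n}$; since $K$ is scattered, $\mu_n$ is an atomic measure supported on $K^{(\alpha_n)}$, and the inequality $|\mu_n(a_i)|\leq 1/n$ persists through sequential closures. A weak*-convergent subsequence of $(\mu_n)$ yields a limit $\mu^{**}$ supported in $\bigcap_n K^{(\alpha_n)} = K^{(\gamma)}$ with $\mu^{**}(a_i) = 0$ for each $i$, hence $\mu^{**} = 0$ by separation. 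Since $\gamma$ is a limit ordinal we have $\alpha_n + 1 < \gamma$, so each $\mu_n \in S_\gamma(F)$, and therefore $0 = \mu^{**} \in S_1(S_\gamma(F)) = S_{\gamma+1}(F)$, which gives the claimed bound.
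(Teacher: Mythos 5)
Your proof is correct, and it shares the paper's backbone --- the Cantor--Bendixson filtration $X_\alpha=\{f\in\CC(K): f|_{K^{(\alpha)}}=0\}$ fed into Theorem \ref{Thm} with $\theta=0$ --- but it diverges at the one genuinely delicate point: how to pass from $X_\gamma$ (which has finite codimension $|K^{(\gamma)}|$) to $\CC(K)$ without paying an extra $+1$. The paper dispatches this in one line: $\CC(K)$ contains a complemented copy of $c_0$, hence is isomorphic to every finite-codimensional subspace of itself, so $\CC(K)\cong X_\gamma$ and the bound obtained for $X_\gamma$ \emph{is} the bound for $\CC(K)$. You instead split into cases and, for limit $\gamma$, run a direct diagonal argument: choose $\mu_n\in S_{\alpha_n+1}(F_n)$ annihilating $X_{\alpha_n}$ with $|\mu_n(a_i)|\le 1/n$, extract a weak*-convergent subsequence, and identify the limit as a measure carried by $K^{(\gamma)}$ that is killed by the $a_i$. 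This does work: each $\mu_n$ lies in $S_\gamma(F)$ because $\alpha_n+1<\gamma$, and $\bigcap_n X_{\alpha_n}^{\perp}=\bigl(\overline{\bigcup_n X_{\alpha_n}}\bigr)^{\perp}=X_\gamma^{\perp}$ is exactly the set of measures supported on $K^{(\gamma)}$. So your route is a self-contained alternative that in effect re-proves a sharpened limit case of Theorem \ref{Thm} at the top level, while the isomorphism trick is shorter and handles all heights uniformly. Two small repairs are needed. In Case (ii) the bound from Theorem \ref{TeoBase} is $\mathrm{ord}(X_{\gamma_0})+1$ (subspace order first, then quotient order), not $1+\mathrm{ord}(X_{\gamma_0})$; your numerical evaluation $\gamma_0+2=\gamma+1$ is the one corresponding to the correct sum, so only the displayed expression is off. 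In Case (iii), functions that merely \emph{separate} the points of $K^{(\gamma)}$ need not force a signed measure on $K^{(\gamma)}$ to vanish (two points and one function taking values $\pm1$ give a counterexample), so you should take $a_1,\dots,a_N$ with $a_i(p_j)=\delta_{ij}$ --- which is presumably what your Urysohn functions are anyway.
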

\begin{proof}
It is well-known that if $K$ is scattered then $\CC(K)$ is Asplund and therefore $B_{C(K)^\ast}$ is weak*-sequentially compact (see, for example, \cite{Yos93}).
Denote by $\lbrace K^{(\alpha)}: \alpha \leq \gamma \rbrace$ the Cantor-Bendixson derivatives of $K$, where $\gamma=ht(K)$.
For every $\alpha \leq \gamma $, set 
$$ X_\alpha = \lbrace f \in \CC(K): f(t)=0 \mbox{ for every } t \in K^{(\alpha)} \rbrace .$$

Since $\CC(K)$ contains a complemented copy of $c_0$, every finite-codimensional subspace of $\CC(K)$ is isomorphic to $\CC(K)$.
Therefore, since $X_\gamma$ is a finite-codimensional subspace of $\CC(K)$, it is isomorphic to $\CC(K)$.
Notice that for every $0 \leq \alpha < \gamma $ we have that $X_{\alpha+1}/X_\alpha$ is isomorphic to $c_0 ( K^{(\alpha)} \setminus K^{(\alpha+1)})$.
Moreover, if $\alpha \leq \gamma$ is a limit ordinal then $\bigcap_{\beta <\alpha} K^{(\beta)}= K^{(\alpha)}$ and therefore
$$ \overline{\bigcup_{\beta< \alpha} X_\beta}= \overline{ \lbrace f \in \CC(K): \exists \beta<\alpha \mbox{ with }f(t)=0 ~~ \forall t \in K^{(\beta)} \rbrace}=X_\alpha.$$
Now the conclusion follows from Theorem \ref{Thm}.
\end{proof}

R. Haydon \cite{Hay78} and K. Kunen \cite{Ku81} constructed under CH a FU compact space $K$ such that $B_{C(K)^\ast}$ does not have countable tightness. Thus, it is not true for a general compact space $K$ that if $K$ is sequential then $B_{C(K)^\ast}$ is sequential. We refer the reader to \cite{FPR00} for a discussion on this topic.

It can be easily checked that the space $JL_0$ is isomorphic to a $\CC(K)$ space where $K$ is a scattered compact space with $ht(K)=2$ and sequential order 2. 
Thus, $JL_0$ has weak*-sequential dual ball with sequential order 2.

The known examples in ZFC of sequential compact spaces are all of sequential order $\leq 2$. 
However, A. Dow constructed under the assumption  $\mathfrak{b}=\mathfrak{c}$ a scattered compact space $K$  of sequential order $4$ such that the sequential  order and the scattering heigth coincide \cite{Dow}. 

\begin{CORO}
\label{Existence}
Under $\mathfrak{b}=\mathfrak{c}$, there exist Banach spaces with weak*-sequential dual balls of any sequential order $\leq 4$. 
\end{CORO}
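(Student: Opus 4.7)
The plan is to exhibit, for each integer $n \in \{1,2,3,4\}$, a Banach space whose weak*-dual ball is sequential of order exactly $n$. Orders $1$ and $2$ are already accounted for by examples given earlier in the paper: any infinite WCG Banach space, such as $c_0$, has weak*-angelic dual and so weak*-FU dual ball of sequential order $1$; and Theorem \ref{JL} yields that $JL_2$ has weak*-sequential dual ball of order exactly $2$. It therefore remains to realize orders $3$ and $4$.

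For these I would invoke Dow's construction (valid under $\mathfrak{b}=\mathfrak{c}$) of a scattered compact space $K$ with $ht(K)=4$ whose sequential order equals its height. Applying Theorem \ref{CoroC(K)} to $K$ bounds the sequential order of $B_{\CC(K)^\ast}$ from above by $4$. For order $3$, I would pass to the first Cantor-Bendixson derivative $K^{(1)}$, a closed scattered subspace of $K$ with $ht(K^{(1)})=3$; in Dow's construction the coincidence between sequential order and scattering height is preserved at each stage, so $K^{(1)}$ has sequential order $3$, and Theorem \ref{CoroC(K)} gives that $B_{\CC(K^{(1)})^\ast}$ has sequential order $\leq 3$.

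For the matching lower bounds, I would embed $K$ (respectively $K^{(1)}$) into its own dual ball via the Dirac map $t \mapsto \delta_t$; this is a weak*-homeomorphism onto a closed subspace of $B_{\CC(K)^\ast}$ (respectively $B_{\CC(K^{(1)})^\ast}$). A straightforward induction on $\alpha$ shows that for any closed subspace $A$ of a topological space $T$ and any $F \subset A$, the iterated sequential closure $S_\alpha(F)$ computed in $A$ agrees with that computed in $T$ (since $A$ is closed, any sequence in $F \subset A$ that converges in $T$ must have its limit in $A$). Consequently, the sequential order of $T$ is at least that of any closed subspace. Applied here, $B_{\CC(K)^\ast}$ has sequential order $\geq 4$ and $B_{\CC(K^{(1)})^\ast}$ has sequential order $\geq 3$, giving the required equalities.

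The main obstacle is justifying that the derivative $K^{(1)}$ of Dow's space still satisfies the equality \emph{sequential order $=$ scattering height}. If this is not explicit in the cited paper, the alternative route is to run Dow's construction directly with target scattering height $3$ under $\mathfrak{b}=\mathfrak{c}$, producing a scattered compact of height and sequential order $3$; the remainder of the argument is unchanged.
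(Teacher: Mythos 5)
Your overall strategy is exactly the intended one (the paper gives no explicit argument for this corollary): orders $1$ and $2$ come from $c_0$ and $JL_2$, the upper bounds for orders $3$ and $4$ come from Theorem \ref{CoroC(K)}, and the matching lower bounds come from the fact that $t\mapsto\delta_t$ embeds $K$ as a weak*-compact, hence weak*-closed, subspace of $B_{\CC(K)^\ast}$, together with your (correct) observation that iterated sequential closures relativize to closed subspaces, so the sequential order of a sequential space dominates that of any closed subspace. All of that is sound.

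The one soft spot is the one you flagged: the claim that $K^{(1)}$ has sequential order $3$. You do not need to appeal to the internals of Dow's construction for this; it follows in general. Indeed, if $K$ is a compact (hence regular) scattered sequential space and $K^{(1)}$ has sequential order $\leq m$, then $K$ has sequential order $\leq m+1$: given $F\subseteq K$ and $x\in\overline{F}\setminus F$, split $F$ into its isolated part $F_0=F\setminus K^{(1)}$ and $F_1=F\cap K^{(1)}$; the case $x\in\overline{F_1}$ is handled inside the closed subspace $K^{(1)}$, and if $x\in\overline{F_0}$ one checks that $x\in\overline{G}$ for $G:=S_1(F_0)\cap K^{(1)}$ (otherwise choose open $V\ni x$ with $\overline{V}\cap G=\emptyset$; then $H:=F_0\cap V$ is not closed, hence by sequentiality some $y\in S_1(H)\setminus H$ exists, and such a $y$ is non-isolated and lies in $\overline{V}\cap S_1(F_0)\cap K^{(1)}$, a contradiction), whence $x\in S_m(G)\subseteq S_{m+1}(F_0)$. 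Contrapositively, since Dow's $K$ has sequential order $4$, its derivative $K^{(1)}$ (a closed, hence sequential, scattered compact of height $3$) has sequential order $\geq 3$, and Theorem \ref{CoroC(K)} plus your embedding argument then give that $B_{\CC(K^{(1)})^\ast}$ has sequential order exactly $3$. With this lemma inserted, your proof is complete and does not depend on unstated features of Dow's construction.
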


\section*{Acknowledgements}
I would like to thank Jos\'{e} Rodr\'{i}guez for his helpful suggestions and remarks.

\end{document}